\newcommand{\e}{\varepsilon}
\newcommand{\R}{\mathbb{R}}
\newcommand{\de}{\partial}
\renewcommand{\d }{\delta }
\newcommand{\D}{{\mathfrak{D}}}
\newcommand{\dsh}{{2^\sharp}}
\newcommand{\dst}{{2^*}}
\newcommand{\Ud}{{\mathcal{U}_{p, \delta}}}
\newcommand{\Vd}{\mathcal{V}_{p, \delta}}
\newcommand{\Wd}{\mathcal{W}_{p, \delta}}
\newcommand*{\abs}[1]{\left\vert #1\right\vert}
\newtheorem{theorem}{Theorem}[section]
\newtheorem*{theorem*}{Theorem}
\newtheorem{lemma}[theorem]{Lemma}
\newtheorem{proposition}[theorem]{Proposition}
\theoremstyle{definition}
\title[Positive blow-up solutions for a linearly perturbed boundary Yamabe problem]{Positive blow-up solutions for a linearly perturbed boundary Yamabe problem}
\author{Sergio Cruz-Blázquez}
\address{Sergio Cruz Blázquez, Dipartimento di Matematica, Università degli Studi di Bari, Via E. Orabona, 4, 70125 Bari (Italy)}
\email{sergio.cruz@uniba.it}
\author{Giusi Vaira}
\address{Giusi Vaira, Dipartimento di Matematica, Università degli Studi di Bari, Via E. Orabona, 4, 70125 Bari (Italy)}
\email{giusi.vaira@uniba.it}
\date\today
\subjclass[2010]{35B44, 53C21, 58J32}
\keywords{Prescribed curvature problem, conformal metric, Blow-up solutions.}
\thanks{S. Cruz and G. Vaira have been supported by INdAM – GNAMPA Project 2022 “Fenomeni di blow-up per equazioni non lineari”, E55F22000270001S and by PRIN 2017JPCAPN}
\begin{document}
\maketitle

\begin{abstract}
We consider the problem of prescribing the scalar and boundary mean curvatures via conformal deformation of the metric on a $n-$ dimensional compact Riemannian manifold. We deal with the case of negative scalar curvature $K$ and boundary mean curvature $H$ of arbitrary sign which are non-constant and $\mathfrak D_n=\sqrt{n(n-1)}H\abs{K}^{-1\over 2}>1$ at some point of the boundary. It is known that this problem admits a positive mountain pass solution if $n=3$, while no existence results are known for $n\geq 4$. We will consider a perturbation of the geometric problem and show the existence of a positive solution which blows-up at a boundary point which is critical for both prescribed curvatures.
\end{abstract}

\section{Introduction}
\noindent Let $(M, g)$ be a compact Riemannian manifold of dimension $n\geq 4$ with boundary $\partial M$. A problem that has drawn considerable attention in the literature is the following: {\it given two smooth functions $K$ and $H$, find a metric which is conformal to $g$ whose scalar curvature is $K$ and boundary mean curvature is $H$.}\\
It is well-known that, if $\tilde g= u^{\frac{4}{n-2}}g$, for some smooth function $u>0$ (called the conformal factor) then $$K=u^{-\frac{n+2}{n-2}}\left(-\frac{4(n-1)}{n-2}\Delta_g u +\mathcal S_g u\right) \,\, \hbox{and}\,\, H=u^{-\frac{n}{n-2}}\left(\frac{2}{n-2}\frac{\partial u}{\partial \nu}+h_g u\right),$$ where $\mathcal S_g$ and $K$ are the scalar curvatures of $g$ and $\tilde g$ respectively, $h_g$ and $H$ denote the boundary mean curvatures, $\Delta_g$ is the Laplace-Beltrami operator and $\nu$ is the outward unit normal vector to $\partial M$. \\ Therefore, the geometric problem can be formulated as the problem of finding a positive solution of the following doubly critical PDE:
\begin{equation}\label{pb0}
\left\{\begin{aligned}&-\frac{4(n-1)}{n-2}\Delta_g u +\mathcal S_g u =K u^{\frac{n+2}{n-2}}\quad&\mbox{in}\,\, M\\
&\frac{2}{n-2}\frac{\partial u}{\partial \nu}+h_g u = H u^{\frac{n}{n-2}}\quad &\mbox{on}\,\, \partial M\end{aligned}\right.\end{equation}
The first paper dealing with \eqref{pb0} is \cite{C}, in which Cherrier established first criterions for the existence of solutions, similar to those available for the classical Yamabe problem on closed manifolds. Successively, Escobar \cite{E1}, Marques \cite{M1, M2}, Mayer \& Ndiaye \cite{MN} and Almaraz \cite{A} provided existence results for the scalar flat case (i.e. $K=0$ and $H$ a constant). Similarly, Escobar in \cite{E2}, Brendle \& Chen in \cite{BC} treated the case of minimal boundary (i.e. $H=0$ and $K$ a constant). \\ The presence of both curvatures was considered first by Escobar in \cite{E4}, then by Han \& Li in \cite{HY1, HY2} and Chen, Ruan \& Sun in \cite{crs}. In those papers the authors worked with constant $K$ and $H$ and $K>0$. \\ 
We would like to point out that the existence results obtained depend on the dimension of the manifold, the properties of the boundary (being or not umbilic) and the vanishing properties of the Weyl tensor (i.e. being identically zero or not on the boundary or on the whole manifold).\\

The case of variable functions has been studied in specific situations. More precisely Ben Ayed,  El Mehdi \& Ould Ahmedou \cite{BEO1, BEO2} and Li \cite{L} studied the case $H=0$ on the half-sphere.  Abdelhedi,  Chtioui \&  Ould Ahmedou \cite{ACA}, Chang,  Xu  \& Yang \cite{CXY}, Djadli,  Malchiodi \& Ould Ahmedou \cite{DMA2} and Xu \& Zhang \cite{XZ} considered the case  $K=0$ and $M$ the unit ball.
The case when both $K$ and $H$ do not vanish has been studied by
Ambrosetti,  Li  \& Malchiodi  \cite{AYM} in a perturbative setting on the $n-$dimensional unit ball  and by Djadli,  Malchiodi \& Ould Ahmedou \cite{DMA1} on the three-dimensional half sphere (here $K>0$).
Finally, we quote the result of  Chen,  Ho \& Sun \cite{CHS} where they found a solution to \eqref{pb0} when $H$ and $K$ are negative functions
provided the manifold is of negative Yamabe invariant.\\
The case $K<0$ and $H$ of arbitrary sign is recently studied in \cite{CMR} by Cruz-Bl\'azquez, Malchiodi \& Ruiz on manifolds of non-positive Yamabe invariant ($\mathcal S_g\leq 0$) equipped with the Escobar metric, that is, a conformal metric $g$ such that $h_g=0$ and $\mathcal S_g$ is a constant-sign function.
\\ They definede the {\it scaling invariant} function $\mathfrak D_n:\partial M \to \mathbb R$ by $$\mathfrak D_n(x)=\sqrt{n(n-1)}\frac{H(x)}{\sqrt{|K(x)|}}$$
and they established the existence of a solution to \eqref{pb0} whenever  $\mathfrak D_n<1$ along the whole boundary. On the other hand, if $\mathfrak D_n>1$ at some boundary point, they got a solution for generic choices of $K$ and $H$ only on three dimensional manifolds by using a mountain pass argument. To recover compactness a careful blow-up analysis was needed. Indeed, they showed that the blow-up occurs at point $p\in\partial M$ such that $\mathfrak D_n(p)\geq 1$. To discard the blow-up around singular points with $\mathfrak D_n>1$ an \textit{isolated and simpleness} condition was needed, which they could only prove in dimension three.
This restriction on the dimension of $M$ is not technical. In fact, three is the maximal dimension for which one can prove that this type of blow-ups are isolated and simple for generic choices of $K$ and $H$. Some partial results are available in higher dimensions (see \cite{AA, BAGOB, ABA}), but a general description of the issue is still missing. 

In light of the above, it is natural to ask:  \begin{center}{\it when $K<0$ and $H$ is such that $\mathfrak D_n>1$ somewhere on $\de M$, what about the existence of solutions for $n\geq 4$?}\\\end{center} 

In this paper, we answer positively to this question in a perturbative setting, i.e. let us consider the Escobar metric (namely $h_g=0$ ) and let $\mathcal S_g>0$. We are led to study a linear perturbation of the geometric problem \eqref{pb0} \begin{equation}\label{pb}
\left\{\begin{aligned}&-\frac{4(n-1)}{n-2}\Delta_g u +\mathcal S_g u =K u^{\frac{n+2}{n-2}}\quad&\mbox{in}\,\, M\\
&\frac{2}{n-2}\frac{\partial u}{\partial \nu}+\varepsilon u = H u^{\frac{n}{n-2}}\quad &\mbox{on}\,\, \partial M\end{aligned}\right.\end{equation} 
where $\varepsilon$ is a small and positive parameter.\\
This problem has been recently studied in \cite{CPV} for $K<0$ and $H>0$ constants. In this work a clustered blow-up solution (i.e. a solution with non-simple blow-up points) for \eqref{pb} is constructed in dimensions $4\leq n\leq 7$, giving a partial counterpart to the blow-up analysis in \cite{CMR}. \\

Here we aim to show the existence of a solution of \eqref{pb} that blows-up as a single bubble as $\e\to 0$ around a boundary point with $\D_n>1$ which is also critical for $H$ and $K$ when $n\geq 4$. More precisely, we assume
\begin{itemize}
\item[(Hyp)] We let $K, H$ be sufficiently regular functions such that $K<0$, $H$ is of arbitrary sign and there exists $p\in \de M$ with $\D_n(p)>1$. Assume further that $p$ is a common local minimum point which is non-degenerate, i.e. $\nabla K(p)=\nabla H(p)=0$ and $D^2 H(p)$ and $D^2K(p)$ are positive definite.
\end{itemize}
The main result is the following.
\begin{theorem}\label{principale}
Let $n\geq 4$ and let us assume (Hyp). Let the trace-free second fundamental form of $\partial M$ non-zero everywhere. Then for $\e>0$ small, there exists a positive solution $u_\e$ of \eqref{pb} such that $u_\e$ blows-up at $p\in \partial M$ as $\e\to 0$.
\end{theorem}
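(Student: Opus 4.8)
The plan is to produce $u_\e$ by a Lyapunov--Schmidt finite-dimensional reduction, with $\e$ playing the role of the small parameter that forces concentration at $p$. Since we work with the Escobar metric ($h_g=0$) and $\mathcal S_g>0$, the form $\langle u,v\rangle:=\int_M\big(\tfrac{4(n-1)}{n-2}\nabla u\cdot\nabla v+\mathcal S_g\,uv\big)\,dV_g$ is an inner product equivalent to the standard one on $H^1(M)$, and the solutions of \eqref{pb} are exactly the positive critical points of
\[ J_\e(u)=\tfrac12\|u\|^2+\tfrac{\e}{2}\int_{\de M}u^2\,d\sigma-\tfrac{n-2}{2n}\int_M K\,u_+^{\frac{2n}{n-2}}\,dV_g-\tfrac{n-2}{2(n-1)}\int_{\de M}H\,u_+^{\frac{2(n-1)}{n-2}}\,d\sigma. \]
Because $K<0$, the interior nonlinear term carries a favourable (coercive) sign, so the only source of non-compactness is the critical trace term weighted by $H$; this is what drives boundary blow-up. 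First I would identify the limiting profile: in Fermi coordinates centred at a boundary point $\xi$, rescaling by $\delta>0$ sends \eqref{pb} to the constant-coefficient problem on $\mathbb R^n_+$ with data $K(\xi)<0$ and $H(\xi)$, which under $\D_n(\xi)>1$ possesses an explicit positive bubble $U$ (a spherical cap exceeding a hemisphere under stereographic projection). The ansatz is then the family $W_{\delta,\xi}=\delta^{-\frac{n-2}{2}}U(\cdot/\delta)$ transplanted to $M$ by a cutoff in Fermi coordinates, corrected by the solution of the linear part, and parametrized by $\delta>0$ small and $\xi$ in a neighbourhood of $p$ in $\de M$.

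Next I would carry out the reduction. The linearization of the limiting equation at $U$ has, by the conformal invariances of the critical problem, a finite-dimensional approximate kernel $\mathcal K_{\delta,\xi}=\mathrm{span}\{\de_\delta W_{\delta,\xi},\de_{\xi_1}W_{\delta,\xi},\dots,\de_{\xi_{n-1}}W_{\delta,\xi}\}$ spanned by the dilation and the $n-1$ boundary-translation fields. Granting the non-degeneracy of $U$ (i.e.\ that these exhaust the kernel), the operator $J_\e''(W_{\delta,\xi})$ is uniformly invertible on the $L^2$-orthogonal complement $\mathcal K_{\delta,\xi}^\perp$, with inverse bounded independently of $\delta$ and $\xi$. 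A contraction-mapping argument then yields, for every small $\delta$ and every $\xi$ near $p$, a unique small correction $\phi_{\delta,\xi}\in\mathcal K_{\delta,\xi}^\perp$ with $\nabla J_\e(W_{\delta,\xi}+\phi_{\delta,\xi})\in\mathcal K_{\delta,\xi}$, together with a quantitative bound $\|\phi_{\delta,\xi}\|=o\big(\|W_{\delta,\xi}\|\big)$ governed by the size of the error $\nabla J_\e(W_{\delta,\xi})$. Positivity of $u_\e=W_{\delta,\xi}+\phi_{\delta,\xi}$ follows from the positivity of $U$, the smallness of $\phi_{\delta,\xi}$, and a maximum-principle argument exploiting the coercive sign of the interior term.

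The construction is thereby reduced to finding critical points of $\widetilde J_\e(\delta,\xi):=J_\e(W_{\delta,\xi}+\phi_{\delta,\xi})$, and the heart of the argument is its sharp expansion as $\delta,\e\to0$. I expect it to read, schematically,
\[ \widetilde J_\e(\delta,\xi)=\E_0\big(K(\xi),H(\xi)\big)+c_1\,\e\,\delta+\delta^2 B(\xi)+o\big(\e\delta+\delta^2\big), \]
where $\E_0$ is the scale-invariant energy of the limiting bubble built from the local curvature values (hence independent of $\delta$), the term $c_1\e\delta$ comes from $\tfrac{\e}{2}\int_{\de M}W_{\delta,\xi}^2$ (finite precisely because $n\ge4$, with $c_1>0$), and $B(\xi)$ collects the $\delta^2$ corrections: the leading geometric term proportional to $\abs{\mathring\pi(\xi)}^2$ from the metric expansion (possibly with a logarithmic factor in the lowest dimension), together with the contributions of $D^2K(\xi)$ and $D^2H(\xi)$ arising from the non-constancy of the curvatures across one bubble. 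The three hypotheses enter exactly here: $\nabla K(p)=\nabla H(p)=0$ with $D^2K(p),D^2H(p)$ positive definite make $p$ a non-degenerate critical point of $\xi\mapsto\E_0(K(\xi),H(\xi))$; the non-umbilicity assumption $\mathring\pi\neq0$ everywhere guarantees $B(p)\neq0$ with the sign opposite to $c_1$ needed to balance; and $\D_n(p)>1$ is what makes the bubble $U$, hence $\E_0$, exist.

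Finally I would extract a critical point of $\widetilde J_\e$. Since $p$ is a non-degenerate critical point of the leading $\xi$-dependence, the implicit function theorem produces $\xi=\xi(\delta,\e)\to p$ solving the $\xi$-equations; inserting this and optimizing the residual one-variable function $\delta\mapsto c_1\e\delta+\delta^2B(p)+\cdots$ gives, as $c_1$ and $B(p)$ have opposite signs, a critical point at $\delta_\e\sim\e\to0$, whose persistence under the $o(\cdot)$ remainder is secured by a topological-degree or min--max scheme. The corresponding pair $(\delta_\e,\xi_\e)$ yields the asserted positive solution $u_\e=W_{\delta_\e,\xi_\e}+\phi_{\delta_\e,\xi_\e}$, blowing up at $p$ as $\e\to0$. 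I expect the main difficulty to lie in two places. The first is the non-degeneracy of the large-cap bubble $U$ in the regime $\D_n>1$: this is precisely the feature that obstructed the blow-up analysis of \cite{CMR} for $n\ge4$, and the whole invertibility step rests on it. The second is pushing the energy expansion to high enough order that $B(\xi)$ emerges with an explicitly signed constant, since the geometric $\delta^2$ term must be disentangled from the curvature-Hessian terms and from possible cancellations, and one must verify it is not absorbed into the remainder — which is exactly where the hypothesis $\mathring\pi\neq0$ becomes indispensable.
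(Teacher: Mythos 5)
Your proposal is correct and follows essentially the same route as the paper: a Lyapunov--Schmidt reduction around the large-cap bubble (corrected by the second-fundamental-form term $V_p$), an expansion of the reduced energy of the form $\mathfrak E(p)+\zeta_n(\e)\left[d\,\mathtt C_n(p)-d^2\mathtt A_n(p)\right]+o(\zeta_n(\e))$ whose $\e\delta$ and $\delta^2$ terms balance at $\delta\sim\e$ (with the logarithmic correction $\delta\sim\rho(\e)$ when $n=4$, as you anticipate), and localization at $p$ via the non-degenerate minimality of $H$ and $K$. The only differences are cosmetic: the paper closes the finite-dimensional step by a direct maximization on a compact neighbourhood of $(p_0,d_0)$ rather than your implicit-function/degree scheme (thereby needing only $C^0$ control of the remainder), and the non-degeneracy of the bubble you flag as the main difficulty is already established in \cite{CPV}, the genuine labour being the second-order energy expansion exactly as you describe.
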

In order to prove Theorem \ref{principale} we employ a classical reduction process. \\ For the best of our knowledge this is the first work in which the perturbed problem \eqref{pb} is considered with nonconstant functions $K$ and $H$. In that regard, we want to remark that the presence of the two functions makes the proof of Theorem \ref{principale} rather intricate. Furthermore, it provides one of the first existence results (albeit in a perturbative setting) for the problem of prescribing non-constant curvatures. The case of constant functions is already contained in \cite{CPV}.\\

The rest of the paper is organized as follows. In Section \ref{pre} some preliminaries are needed. In Section \ref{ansatz} we find the good ansatz of our solution. Then, in Sections \ref{S-AUX} and \ref{BIF} we solve our problem and we give a proof of the main Theorem. 

\section{Notation and Preliminaries}\label{pre}
We endorse the Sobolev space $H^1_g(M)$ the scalar product $$\langle u, v\rangle :=\int_M \left(c_n \nabla_g u\nabla_g v + \mathcal S_g uv\right)\, d\nu_g$$ where $d\nu_g$ is the volume element of the manifold and $c_n:=\frac{4(n-1)}{(n-2)}$.\\
We let $\|\cdot\|$ be the norm induced by $\langle \cdot, \cdot\rangle$. Moreover, for any $u\in L^q(M)$ and $v\in L^q(\partial M)$, we put $$\|u\|_{L^q(M)}:=\left(\int_M |u|^q\, d\nu_g\right)^{\frac 1 q} \quad\text{and}\quad \|v\|_{L^q(\partial M)}:=\left(\int_{\partial M}|v|^q\, d\sigma_g\right)^{\frac 1q},$$ where $d\sigma_g$ is the volume element of $\partial M$. For notational convenience, we will often omit the volume or surface elements in integrals. 

\smallskip 

We have the well-known embedding continuous maps
$$\begin{aligned} &\mathfrak i_{\partial M}: H^1_g(M)\to L^{\frac{2(n-1)}{n-2}}(\partial M),\qquad\qquad & \mathfrak i_M: H^1_g(M) \to L^{\frac{2n}{n-2}}(M),\\ &\mathfrak i^*_{\partial M}: L^{\frac{2(n-1)}{n}}(\partial M)\to H^1_g(M),\qquad\qquad & \mathfrak i^*_M: L^{\frac{2n}{n+2}}(M)\to H^1_g(M).\end{aligned}$$
Now, given $\mathfrak f\in L^{\frac{2(n-1)}{n}}(\partial M)$, the function $w_1=\mathfrak i^*_{\partial M}(\mathfrak f)$ in $H^1_g(M)$ is defined as the unique solution of the equation \begin{equation*}\left\{\begin{aligned}&-c_n\Delta_g w_1+\mathcal  S_g w_1=0\quad &\mbox{in}\,\, M,\\ &\frac{\partial w_1}{\partial \nu}=\mathfrak f\quad &\mbox{on}\,\, \partial M.\end{aligned}\right.\end{equation*}
Similarly, if we let $\mathfrak g\in L^{\frac{2n}{n+2}}(M)$, $w_2=\mathfrak i^*_M(\mathfrak g)$ denotes the unique solution of the equation
\begin{equation*}\left\{\begin{aligned}&-c_n\Delta_g w_2+\mathcal S_g w_2=\mathfrak g\quad &\mbox{in}\,\, M,\\ &\frac{\partial w_2}{\partial \nu}=0\quad &\mbox{on}\,\, \partial M.\end{aligned}\right.\end{equation*}
By continuity of $\mathfrak i_M$ and $\mathfrak i_{\partial M}$ we get
$$\|\mathfrak i^*_{\partial M}(\mathfrak f)\|\leq C_1 \|\mathfrak f\|_{L^{\frac{2(n-1)}{n}}(\partial M)}\quad\text{and}\quad \|\mathfrak i^*_M(\mathfrak g)\|\leq C_2 \|\mathfrak g\|_{L^{\frac{2n}{n+2}}(M)},$$ for some $C_1>0$ and independent of $\mathfrak f$ and some $C_2>0$ and independent of $\mathfrak g$. Then, we are able to rewrite the problem \eqref{pb} as \begin{equation}\label{pb1} u=\mathfrak i^*_M(K\mathfrak g(u))+\mathfrak i^*_{\partial M}\left(\frac{n-2}{2}H \mathfrak f(u)-\varepsilon u \right),\end{equation} where we set $\mathfrak g(u):=(u^+)^{\frac{n+2}{n-2}}$ and $\mathfrak f(u)=(u^+)^{\frac{n}{n-2}}$.

\smallskip 

We also define the energy $J_\varepsilon: H^1_g(M)\to\mathbb R$ 
\begin{equation}\label{energia}\begin{aligned}
J_\varepsilon(u)&:=\int_M \left(\frac{c_n}{2}|\nabla_g u|^2+\frac 12\mathcal S_g u^2-K \mathfrak G(u)\right)\,d\nu_g-c_n\frac{n-2}{2}\int_{\partial M} H \mathfrak F(u)\, d\sigma_g\\&+(n-1)\varepsilon\int_{\partial M} u^2\,d\sigma_g,\end{aligned}\end{equation}
being $$\mathfrak G(s)=\int_0^s \mathfrak g(t)\, dt,\qquad \mathfrak F(s)=\int_0^s \mathfrak f(t)\,dt.$$ Notice that the critical points of \eqref{energia} are weak solutions to the problem \eqref{pb}.

\smallskip

The aim is to build solutions to \eqref{pb1} that blows-up at a boundary point as $\varepsilon\to 0^+$ for $n\geq 4$.\\ 
Let $p\in\partial M$ with $\D_n(p)>1$. The main ingredient to cook up our solution is the so-called \textit{bubble}, whose expression is given by
\begin{equation*}
U_{\delta, x_0(\delta)}(x):=\frac{\alpha_n}{|K(p)|^{\frac{n-2}{4}}}\frac{\delta^{\frac{n-2}{2}}}{\left(|x-x_0(\delta)|^2-\delta^2\right)^{\frac{n-2}{2}}}
\end{equation*} 
where $\alpha_n:=\left(4n(n-1)\right)^{\frac{n-2}{4}}$, $x_0(\delta):=$ $\left(\tilde x_0,-\mathfrak D_n(p)\delta\right) \in \mathbb R^n$, $\tilde x_0\in \R^{n-1}$ and $\delta>0$. When $\D_n(p)>1$, the $n-$dimensional family of functions defined above describe all the solutions to the following problem in $\R^n_+$ (see \cite{cfs}):
\begin{equation*}
\left\{\begin{aligned}& -c_n \Delta U =-|K(p)|U^{\frac{n+2}{n-2}}\quad &\mbox{in}\,\, \mathbb R^n_+\\ &\frac{2}{n-2}\frac{\partial U}{\partial\nu}=H(p) U^{\frac{n}{n-2}}\quad &\mbox{on}\,\,\partial\mathbb R^n_+.\end{aligned}\right.\end{equation*}
We set \begin{equation}\label{U}U(x)=U_{1, x_0(1)}(\tilde x, x_n)=\frac{\alpha_n}{|K(p)|^{\frac{n-2}{4}}}\frac{1}{\left(|\tilde x|^2+(x_n+\mathfrak D_n(p))^2-1\right)^{\frac{n-2}{2}}},\end{equation} where $\tilde x=(x_1, \ldots, x_{n-1})\in \mathbb R^{n-1}$ and $x_n>0$.\\
We also need to introduce the linear problem 
\begin{equation}\label{lineare}
\left\{\begin{aligned}&-c_n\Delta v +|K(p)|\frac{n+2}{n-2}U^{\frac{4}{n-2}}v=0\quad &\mbox{in}\,\, \mathbb R^n_+\\ &\frac{2}{n-2}\frac{\partial v}{\partial\nu}-\frac{n}{n-2}H(p)U^{\frac{n}{n-2}}v=0\quad &\mbox{on}\,\, \partial\mathbb R^n_+.\end{aligned}\right.\end{equation}
In Section 2 of \cite{CPV} we have shown that the $n-$ dimensional space of solutions of \eqref{lineare}  is generated by the functions 
\begin{equation}\label{Ji}
\mathfrak j_i(x):=\frac{\partial U}{\partial x_i}(x)=\frac{\alpha_n}{|K(p)|^{\frac{n-2}{4}}}\frac{(2-n) x_i}{\left(|\tilde x|^2+(x_n+\mathfrak D_n(p))^2-1\right)^{\frac{n}{2}}},\quad i=1, \ldots, n-1
\end{equation}
and
\begin{equation}\label{Jn}\begin{aligned}
\mathfrak j_n(x)&:=\left(\frac{2-n}{2}U(x)-\nabla U(x)\cdot (x+\mathfrak D_n(p)\mathfrak e_n)+\mathfrak D_n(p)\frac{\partial U}{\partial x_n}\right)\\
&=\frac{\alpha_n}{|K(p)|^{\frac{n-2}{4}}}\frac{n-2}{2}\frac{|x|^2+1-\mathfrak D_n(p)^2}{\left(|\tilde x|^2+(x_n+\mathfrak D_n(p))^2-1\right)^{\frac{n}{2}}}.\end{aligned}\end{equation}
As we will see, bubbles are not a good enough approximating solution, and need to be corrected by a higher order term $V_p:\mathbb R^n_+\to \mathbb R$ whose main properties are collected in the next proposition (see \cite[Proposition 3.1]{CPV} for the proof).
\begin{proposition}\label{vp} Let $U$ be as in \eqref{U} and set
	$$
		\mathtt E_p(x)=\sum\limits_{i,j=1}^{n-1}\frac{8(n-1)}{n-2}h^{ij}(p)\frac{\de^2 U(x)}{\de x_i \de x_j}x_n,\ x\in \mathbb R^n_+
	$$
where $h^{ij}(p)$ are the coefficients of the second fundamental form of $M$ at the point $p\in\de M$. Then the problem
\begin{equation*}
	\left\lbrace \begin{array}{ll}
		-\frac{4(n-1)}{n-2}\Delta V + \frac{n+2}{n-2}\abs{K(p)}U^\frac{4}{n-2}V=	\mathtt E_p & \text{ in } \R^n_+, \\[0.15cm]
		\frac{2}{n-2}\frac{\de V}{\de \nu} - \frac{n}{n-2}H(p) U^\frac{2}{n-2}V=0 & \text{ on } \de \R^n_+,
	\end{array}\right.
\end{equation*}
admits a solution $V_p$ satisfying the following properties:
\begin{enumerate}
	\item[(i)] $\displaystyle\int\limits_{\R^n_+}V_p(x)\mathfrak z_i(x)dx=0$ for any $ i=1,\ldots,n$ (see \eqref{Ji} and \eqref{Jn}),
	\item[(ii)] $\abs{\nabla^\alpha V_p}(x) \lesssim \frac1{\left(1+\abs{x}\right)^{n-3+\alpha}}$ for any  $x\in\R^n_+ $ and $\alpha = 0,1,2,$
	\item[(iii)]  
	\begin{equation*}
		|K(p)|\int_{\R^n_+} U^\frac{n+2}{n-2}V_pdx = (n-1)H(p)\int_{\de\R^n_+} U^\frac{n}{n-2}V_p\,d\tilde x,
	\end{equation*}
	\item[(iv)]   if  $n\geq 5$ $$\int_{\R^n_+}\left(-\frac{4(n-1)}{n-2}\Delta V_p+\frac{n+2}{n-2}\abs{K}U^\frac{4}{n-2}V_p\right)V_p\geq 0,\quad\text{and}$$	
	\item[(v)] the map $p\mapsto V_p$ is $C^2(\de M)$. 
\end{enumerate}
\end{proposition}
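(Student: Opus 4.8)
The plan is to construct $V_p$ by solving the linear problem in the $L^2$-orthogonal complement of the kernel of the linearization and then to read off (i)--(v) from the construction. Writing $L_pV:=-c_n\Delta V+\frac{n+2}{n-2}|K(p)|U^{\frac{4}{n-2}}V$ and keeping the Robin condition of the statement, I associate to it the bilinear form
\[
Q_p(V,W) = c_n\int_{\R^n_+}\nabla V\cdot\nabla W + \frac{n+2}{n-2}|K(p)|\int_{\R^n_+}U^{\frac{4}{n-2}}VW - \frac n2 c_n H(p)\int_{\de\R^n_+}U^{\frac{2}{n-2}}VW,
\]
defined on $\mathcal D^{1,2}(\R^n_+)$, so that weak solutions are exactly the $V$ with $Q_p(V,\cdot)=\int\mathtt E_p(\cdot)$. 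By the non-degeneracy proved in Section 2 of \cite{CPV}, $\ker L_p=\mathrm{span}\{\mathfrak j_1,\dots,\mathfrak j_n\}$ and $Q_p$ is coercive on the closed subspace $\mathcal K^\perp=\{V:\int_{\R^n_+}V\mathfrak j_i=0,\ i=1,\dots,n\}$ (its bulk part is manifestly positive, and the only indefinite contribution is the boundary term). First I would check that $\mathtt E_p\perp\mathfrak j_i$ in $L^2$ for every $i$: for $i\leq n-1$ this is a parity argument, since $\partial^2_{x_i x_j}U$ is even in the horizontal variables when $i=j$ and odd in $x_i,x_j$ when $i\neq j$, while $\mathfrak j_i$ is odd in $x_i$; for $i=n$ only the diagonal terms survive and their sum equals $\mathrm{tr}(h(p))$ times a universal constant, which vanishes because the Escobar gauge $h_g=0$ forces the second fundamental form to be trace-free. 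Given this, Lax--Milgram on $\mathcal K^\perp$ produces a unique $V_p$, the vanishing of $\mathtt E_p$ on $\mathcal K:=\ker L_p$ guarantees that no Lagrange multipliers appear so $V_p$ solves the stated problem, and property (i) holds by construction.

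For the two identities I would pair the equations for $U$ and $V_p$. Multiplying $-c_n\Delta U=-|K(p)|U^{\frac{n+2}{n-2}}$ by $V_p$, multiplying $L_pV_p=\mathtt E_p$ by $U$, integrating by parts and subtracting, the Dirichlet cross terms cancel and the two boundary conditions combine (using $U^{\frac{2}{n-2}}U=U^{\frac n{n-2}}$) to give
\[
\frac{4}{n-2}|K(p)|\int_{\R^n_+}U^{\frac{n+2}{n-2}}V_p-\int_{\R^n_+}\mathtt E_p\,U = c_n H(p)\int_{\de\R^n_+}U^{\frac n{n-2}}V_p.
\]
The term $\int\mathtt E_p U$ vanishes by the same parity-and-trace-free argument as above, and dividing by $\tfrac{4}{n-2}$ gives precisely (iii). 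For (iv), testing $L_pV_p=\mathtt E_p$ against $V_p$ shows that the quantity there equals $Q_p(V_p,V_p)$, which is $\geq 0$ since $V_p\in\mathcal K^\perp$ and $Q_p$ is positive there; the restriction $n\geq 5$ is needed only so that the Dirichlet integral $\int_{\R^n_+}|\nabla V_p|^2$ converges, as the decay $|\nabla V_p|\sim|x|^{-(n-2)}$ is square-integrable at infinity exactly when $n>4$.

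The sharp pointwise bounds (ii) are, to my mind, the main obstacle. From $U\sim|x|^{-(n-2)}$ one has $|\mathtt E_p|\lesssim(1+|x|)^{-(n-1)}$, and the Newtonian scaling suggests $|V_p|\lesssim(1+|x|)^{-(n-3)}$. I would prove this by comparison: the radial function $(1+|x|)^{-(n-3)}$ is a supersolution of $L_p$ away from the origin, since $-c_n\Delta|x|^{-(n-3)}=(n-3)c_n|x|^{-(n-1)}$ dominates $|\mathtt E_p|$ while the zeroth-order potential only helps. The delicate point is to propagate the comparison up to $\de\R^n_+$ in the presence of the Robin condition $\partial_\nu V_p=\tfrac n2 H(p)U^{\frac{2}{n-2}}V_p$, whose sign is unfavorable when $H(p)>0$, so the barrier must be corrected near the boundary. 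Once the $L^\infty$ bound is in place, the estimates for $\alpha=1,2$ follow from rescaled interior and boundary Schauder estimates on dyadic annuli. For (v) I would differentiate $Q_p(V_p,\cdot)=\int\mathtt E_p(\cdot)$ in $p$: since $K$, $H$ and $h^{ij}$, hence $U$, $\mathtt E_p$ and $Q_p$, depend $C^2$ on $p\in\de M$ and $Q_p$ is uniformly coercive on $\mathcal K^\perp$, the implicit function theorem yields $p\mapsto V_p\in C^2(\de M)$. Finally, in the borderline case $n=4$ the field $V_p$ just fails to have finite Dirichlet energy, so the functional setting above must be replaced by a weighted one (this leaves (i)--(iii) and (v) intact and is exactly why (iv) is stated only for $n\geq 5$).
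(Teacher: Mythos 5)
The paper does not actually prove this proposition: it is imported verbatim from \cite{CPV} (Proposition 3.1 there), so the comparison below is with the argument carried out in that reference.

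Your orthogonality computations are correct and match the standard route: $\mathtt E_p\perp_{L^2}\mathfrak j_i$ by parity for $i\leq n-1$, and for $i=n$ because $h_g=0$ forces the second fundamental form to be trace--free; the same mechanism gives $\int\mathtt E_p\,U=0$, and your pairing of the two equations then yields (iii) exactly. The genuine gap is the assertion that $Q_p$ is coercive (or even nonnegative) on $\mathcal K^\perp=\{V:\int_{\R^n_+}V\mathfrak j_i=0\}$. This is false in general. Since each $\mathfrak j_i$ solves the homogeneous problem \eqref{lineare} (interior equation \emph{and} boundary condition), one has $Q_p(\mathfrak j_i,w)=0$ for all $w$; hence if $Q_p(\phi,\phi)<0$ for some $\phi$, subtracting the $L^2$-projection of $\phi$ onto $\mathcal K$ produces $\psi\in\mathcal K^\perp$ with $Q_p(\psi,\psi)=Q_p(\phi,\phi)<0$. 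And $Q_p$ does have negative directions: testing with $U$ itself and using $c_n\int_{\R^n_+}|\nabla U|^2=\frac{c_n(n-2)}{2}H(p)\int_{\de\R^n_+}U^\dsh-|K(p)|\int_{\R^n_+}U^\dst$ gives
\begin{equation*}
Q_p(U,U)=\frac{4}{n-2}\left(|K(p)|\int_{\R^n_+}U^\dst-(n-1)H(p)\int_{\de\R^n_+}U^\dsh\right),
\end{equation*}
which, via the identity $(n-2)\varphi_{\frac{n-1}{2}}(p)+(n-1)\varphi_{\frac{n+1}{2}}(p)=\mathfrak D_n(p)\left(\mathfrak D_n^2(p)-1\right)^{-\frac{n-1}{2}}$, reduces to a positive multiple of $\varphi_{\frac{n+1}{2}}(p)-(n-2)\varphi_{\frac{n-1}{2}}(p)$; this is negative as soon as $\mathfrak D_n(p)$ is large. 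So for such $p$ the form is indefinite on $\mathcal K^\perp$.

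This has two consequences. First, Lax--Milgram on $\mathcal K^\perp$ is not available; existence must instead be obtained from the Fredholm alternative (the operator is a compact perturbation of the identity on the relevant space, its kernel is $\mathcal K$ by the nondegeneracy result of \cite{CPV}, and $\mathtt E_p\perp\mathcal K$), which also delivers (i). Second, and more seriously, your proof of (iv) rests entirely on the discarded positivity claim and therefore collapses; in \cite{CPV} property (iv) is obtained by \emph{computing} the quantity explicitly -- it equals $\mathtt F_n(p)\|\pi(p)\|^2$ with $\mathtt F_n>0$ (Proposition B.2 there, which is precisely what the present paper invokes later) -- not by an abstract positivity argument. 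Finally, (ii), which you rightly single out as the main analytic content, is only sketched: the barrier is not constructed, the unfavourable sign of the Robin condition when $H(p)>0$ is flagged but not resolved, and the weighted framework needed for $n=4$ is left unspecified. As written, the proposal secures (i) and (iii) after replacing Lax--Milgram by Fredholm, but not the existence argument as stated, nor (ii) or (iv).
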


Here and in the sequel we agree that $f\lesssim g$  means $|f|\leq C |g| $ for some positive constant $c$ which is independent on $f$ and $g$ and $f\sim g$  means $f= g(1+o(1))$. We use the letter $C$ to denote a positive constant that may change from line to line.
\section{The ansatz}\label{ansatz}
We are now able to define the good ansatz of the solution we are looking for. To this aim, take $p\in\partial M$ with $\D_n(p)>1$ and consider $\psi_p^\partial: \mathbb R^n_+\to M$ the Fermi coordinates in a neighborhood of $p$. Then, let us define 
\begin{equation*}
\begin{aligned}\mathcal W_{p,\delta}(\xi)&:=\chi\left(\left(\psi_{p}^\partial\right)^{-1}(\xi)\right)\left[\frac{1}{\delta^{\frac{n-2}{2}}}U\left(\frac{\left(\psi_{p}^\partial\right)^{-1}(\xi)}{\delta}\right)+\delta\frac{1}{\delta^{\frac{n-2}{2}}}V_p\left(\frac{\left(\psi_{p}^\partial\right)^{-1}(\xi)}{\delta}\right)\right],\end{aligned}\end{equation*}
where $\chi$ is a radial cut-off function with support in a ball of radius $R$.

\smallskip

We look for solutions of \eqref{pb} or \eqref{pb1} of the form 
\begin{equation}\label{ue}u_\varepsilon(\xi):=\Wd(\xi)+\Phi_\varepsilon(\xi)\quad\xi\in M,\end{equation}
where the concentration parameter $\delta$ is choosen as follows:
\begin{equation*}
\delta:=\varepsilon d,\quad   d\in(0, +\infty)\,\quad \hbox{if}\ n\geq5,
\end{equation*}
or   
\begin{equation*}
\delta:=\rho(\e)d,\  d\in(0, +\infty) \  
 \quad \hbox{if}\ n=4,
\end{equation*}
where $\rho$  is the inverse function of 
$\ell:(0, e^{-\frac 12})\to \left(0, \frac{e^{-1}}{2}\right)$ defined by $\ell(s)=-s\ln  s$ and $d$ is chosen in \eqref{choiced}. We remark that $\rho(\e)\to 0$  as $\e\to 0$.

\smallskip

The remainder term $\Phi_\varepsilon(\xi)$ belongs to $\mathcal K^\bot$ and is defined as follows. For any $i=1, \ldots, n$ we define the functions
\begin{equation*}
\mathcal Z_{i}(\xi):=\frac{1}{\delta^{\frac{n-2}{2}}}\mathfrak j_i\left(\frac{\left(\psi_p^\partial\right)^{-1}(\xi)}{\delta}\right)\chi\left(\left(\psi_p^\partial\right)^{-1}(\xi)\right)\,\, i=1, \ldots, n,\end{equation*} where $\mathfrak j_i$ are defined in \eqref{Ji} and \eqref{Jn}.\\
We decompose $H^1_g(M)$ in the direct sum of the following two subspaces $$\mathcal K={\rm span}\left\{\mathcal Z_{i}\,\,:\,\, i=1, \ldots, n\,\, \right\}$$ and $$\mathcal K^\bot:=\left\{\varphi\in H^1_g(M)\,:\, \langle \varphi, \mathcal Z_{i}\rangle=0,\quad i=1, \ldots, n\,\, \right\},$$ and we define the corresponding projections $$\Pi: H^1_g(M)\to \mathcal K,\qquad \Pi^\bot: H^1_g(M)\to \mathcal K^\bot.$$
Therefore solving \eqref{pb1} is equivalent to solve the system
\begin{equation}\label{aux}\Pi^\bot\left\{u_\varepsilon -\mathfrak i^*_M(K \mathfrak g(u_\varepsilon))-\mathfrak i^*_{\partial M}\left(\frac{n-2}{2}H \mathfrak f(u_\varepsilon)-\varepsilon u_\varepsilon\right)\right\}=0, \end{equation}
\begin{equation}\label{bif}\Pi\left\{u_\varepsilon -\mathfrak i^*_M(K \mathfrak g(u_\varepsilon))-\mathfrak i^*_{\partial M}\left(\frac{n-2}{2}H \mathfrak f(u_\varepsilon)-\varepsilon u_\varepsilon\right)\right\}=0,\end{equation}  with $u_\varepsilon$ defined in \eqref{ue}.

\eqref{aux} and \eqref{bif} are called the \textit{auxiliar} and the \textit{bifurcation} equations, respectively.
\smallskip

Finally, we introduce some integral quantities that will appear in our computations, whose properties are listed in the Appendix of \cite{CPV}:
\begin{equation*}I_m^\alpha:=\int_0^{+\infty}\frac{\rho^\alpha}{(1+\rho^2)^m}\, d\rho,\quad \hbox{with}\, \alpha+1<2m\end{equation*}
and, for $p\in\partial M$ with $\D_n(p)>1$,
\begin{equation*}
 \varphi_m(p):=\int_{\mathfrak D_n(p)}^{+\infty}\frac{1}{(t^2-1)^m}\, dt\quad\hbox{and}\quad {\hat\varphi}_m(p):=\int_{\mathfrak D_n(p)}^{+\infty}\frac{(t-\mathfrak D_n(p))^2}{(t^2-1)^m}\, dt .\end{equation*}

\section{The Remainder Term: Solving Equation (\ref{aux})} \label{S-AUX}
In this section we find the remainder term $\Phi_\varepsilon\in\mathcal K^\bot$ via a fixed point argument. First, we rewrite the equation \eqref{aux} as 
\begin{equation}\label{aux1}
\mathcal E+\mathcal L(\Phi_\varepsilon)+\mathcal N(\Phi_\varepsilon)=0,\end{equation}
where the error term $\mathcal E$ is defined by
\begin{equation*}
\mathcal E:=\Pi^\bot\left\{\Wd -\mathfrak i^*_M\left(K \mathfrak g\left(\Wd \right)\right)-\mathfrak i^*_{\partial M}\left(\frac{n-2}{2} H \mathfrak f\left(\Wd \right)-\varepsilon \Wd \right)\right\},\end{equation*} 
the linear operator $\mathcal L$ is defined by 
\begin{equation*}
\mathcal L(\Phi_\varepsilon):=\Pi^\bot\left\{\Phi_\varepsilon-\mathfrak i^*_M\left(K \mathfrak g'\left(\Wd \right)\Phi_\varepsilon\right)-\mathfrak i^*_{\partial M}\left(\frac{n-2}{2}H \mathfrak f'\left(\Wd\right)\Phi_\varepsilon -\varepsilon  \Phi_\varepsilon\right)\right\}\end{equation*} and 
\begin{equation*}\begin{aligned}
\mathcal N (\Phi_\varepsilon)&:=\Pi^\bot\left\{-\mathfrak i^*_M\left[K\left(\mathfrak g\left(\Wd +\Phi_\varepsilon\right)-\mathfrak g\left(\Wd \right)-\mathfrak g'\left(\Wd\right)\Phi_\varepsilon\right)\right]\right.\\
&\left.-\mathfrak i^*_{\partial M}\left[\frac{n-2}{2}H\left(\mathfrak f\left(\Wd +\Phi_\varepsilon\right)-\mathfrak f\left(\Wd \right)-\mathfrak f'\left(\Wd \right)\Phi_\varepsilon\right)\right]\right\}.\end{aligned}\end{equation*}
Reasoning as in Proposition 4.1 of \cite{CPV} and using the fact that $p$ is a critical point of $H$ and $K$ we are able to establish the following result:
\begin{proposition}\label{esistphi}
Let $n\geq 4$. Let $p\in\partial M$ a critical point of $H$ and $K$. For any real numbers $a, b$ such that $0<a<b$,
there exists a positive constant $C=C(a, b)$ and $\e_0>0$  such that for any $\e\in (0, \e_0)$ and for any $d_0\in [a, b]$ there exists a unique function $\Phi_\e\in\mathcal K^\bot$ which solves equation \eqref{aux1}. 
Moreover, the map $d_0 \mapsto \Phi_\e (d_0)$ is of class $C^1$ and 
$$\|\Phi_\e\|\lesssim\left\{\begin{aligned}&\e^2 \quad &\hbox{if}\,\,\ n\geq 7\\ &\e^2|\ln \e|^{\frac 23} \quad &\hbox{if}\,\,\ n=6\\
& \e^{\frac 32}  \quad &\hbox{if}\,\,\ n=5\\
&\rho(\e)\quad &\hbox{if}\,\,\ n=4.\end{aligned}\right.
$$\end{proposition}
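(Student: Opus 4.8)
The plan is to solve \eqref{aux1} by a contraction-mapping argument, once the linear part $\mathcal L$ is shown to be invertible on $\mathcal K^\bot$ with inverse bounded uniformly in $\e$ and in $d_0\in[a,b]$. Writing \eqref{aux1} in fixed-point form
\begin{equation*}
\Phi_\e = T(\Phi_\e):=-\mathcal L^{-1}\bigl(\mathcal E+\mathcal N(\Phi_\e)\bigr),
\end{equation*}
the existence and uniqueness of $\Phi_\e$ in a small ball of $\mathcal K^\bot$ will follow once $T$ is seen to map such a ball into itself and to be a contraction there. The radius of the ball is dictated by the size of $\mathcal E$, which is exactly the source of the dimension-dependent rates in the statement.

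First I would establish the uniform invertibility of $\mathcal L$. Since the maps $\mathfrak i^*_M$ and $\mathfrak i^*_{\partial M}$ are continuous and the weights $\mathfrak g'(\Wd)$, $\mathfrak f'(\Wd)$ are concentrated, $\mathcal L$ is a compact perturbation of the identity on $\mathcal K^\bot$, so by Fredholm theory it suffices to prove a uniform lower bound $\|\mathcal L(\Phi)\|\geq c\|\Phi\|$. After rescaling by $\delta$ and passing to the half-space, $\mathcal L$ converges to the linearization of the limit problem \eqref{lineare}, whose kernel is spanned by $\mathfrak j_1,\dots,\mathfrak j_n$. These directions are precisely what generate $\mathcal K$ and have been projected out, so the standard contradiction argument (supposing $\|\mathcal L(\Phi_\e^k)\|/\|\Phi_\e^k\|\to 0$, rescaling, and extracting a nontrivial limit that lies in the kernel yet is orthogonal to it) yields the bound. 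This is where the non-degeneracy result of Section 2 of \cite{CPV} is used.

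The heart of the matter is the estimate of $\mathcal E$. I would compute $\|\mathcal E\|$ by duality, testing against an arbitrary $\varphi\in\mathcal K^\bot$ with $\|\varphi\|=1$, using $\langle\mathfrak i^*_M(\mathfrak h),\varphi\rangle=\int_M\mathfrak h\varphi$ and $\langle\mathfrak i^*_{\partial M}(\mathfrak h),\varphi\rangle=\int_{\partial M}\mathfrak h\varphi$, and expanding the densities in the rescaled Fermi coordinates $y=(\psi_p^\partial)^{-1}(\xi)/\delta$. Four competing sources appear: (a) the deviation of the metric from the Euclidean one, captured at leading order by the second fundamental form and cancelled exactly by the $\delta V_p$ correction through Proposition \ref{vp}; (b) the Taylor expansion of $K$ and $H$ about $p$, whose first-order terms vanish because $\nabla K(p)=\nabla H(p)=0$, so that the leading curvature error is quadratic, of order $\delta^2$; (c) the linear perturbation $\e\Wd$ on the boundary, of order $\e\delta$; and (d) the cut-off error and the residual defect of the $\delta V_p$ term. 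Balancing these, and tracking the borderline integrals $I_m^\alpha$, $\varphi_m$, $\hat\varphi_m$ with sharp asymptotics rather than crude bounds, produces the claimed rates: $\e^2$ for $n\geq 7$, the logarithmic correction $\e^2|\ln\e|^{2/3}$ at $n=6$, $\e^{3/2}$ at $n=5$, and $\rho(\e)$ at $n=4$, the last reflecting the definition of $\delta$ through the inverse of $\ell(s)=-s\ln s$. This bookkeeping is the main obstacle, because the relative order of the sources (a)--(d) changes with $n$ and the critical cases $n=4,5,6$ must be handled with the precise integral expansions.

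For the nonlinear term I would use the mean-value/convexity estimates for $\mathfrak g$ and $\mathfrak f$ to bound $\|\mathcal N(\Phi_1)-\mathcal N(\Phi_2)\|$ by a quantity that is superlinear in $\max_i\|\Phi_i\|$ (it measures the gap between $\mathfrak g,\mathfrak f$ and their linearizations), hence negligible on the small ball, so that $T$ is a genuine contraction; together with $\|T(0)\|\lesssim\|\mathcal L^{-1}\|\,\|\mathcal E\|$ this gives the fixed point and the bound $\|\Phi_\e\|\lesssim\|\mathcal E\|$, i.e.\ the stated rates. Finally, to obtain the $C^1$ dependence of $d_0\mapsto\Phi_\e(d_0)$ I would apply the implicit function theorem to the $C^1$ map $(d_0,\Phi)\mapsto\mathcal E+\mathcal L(\Phi)+\mathcal N(\Phi)$, whose differential in $\Phi$ is, up to a small perturbation, the invertible operator $\mathcal L$; the smoothness in $d_0$ is inherited from the smooth dependence on the parameter of $\Wd$, of $V_p$ (Proposition \ref{vp}(v)), and of the projections $\Pi,\Pi^\bot$.
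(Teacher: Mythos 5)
Your outline is correct and follows essentially the same route as the paper, which proves this proposition by invoking the contraction-mapping/Lyapunov--Schmidt argument of Proposition 4.1 in \cite{CPV} (uniform invertibility of $\mathcal L$ on $\mathcal K^\bot$, error estimate by duality, quadratic estimate on $\mathcal N$, implicit function theorem for the $C^1$ dependence). You also correctly identify the only genuinely new ingredient here, namely that $\nabla K(p)=\nabla H(p)=0$ kills the first-order Taylor terms of the curvatures so that their contribution to $\mathcal E$ is of order $\delta^2$ and the rates of \cite{CPV} are preserved.
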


\section{The Reduced Energy and the Proof of Theorem \ref{principale}}\label{BIF}
Let us introduce the reduced energy 
\begin{equation}\label{RE}
\mathfrak J_\e(d, p):= J_\e(\mathcal W_{\delta, p}+\Phi_\e),\end{equation} where
the remainder term $\Phi_\e$ is given by Proposition \ref{esistphi}.\\ The following result allows as usual to reduce our problem to a finite dimensional one. 
\begin{proposition}\label{pc} The following assertions hold true:
\begin{itemize}
\item[(i)] $\mathcal W_{\delta, p}+\Phi_\e$ is a solution to \eqref{pb} if and only if  
 $(d, p)\in(0, +\infty)\times\partial M$ is a critical point of the reduced energy $\mathfrak J_\e(p, d)$ defined in \eqref{RE}.

\item[(ii)] The following expansion holds:
$$\mathfrak J_\e(d, p)=\mathfrak E(p)  -\zeta_n(\e) \left[ d^2\mathtt A_n(p)-d\mathtt C_n(p)\right]+o(\zeta_n(\e)),$$
 where $\mathfrak E(p)$ is the energy of the bubble evaluated in Lemma \ref{energiabubble}, $$\mathtt A_n(p):= \mathtt R(p)\|\pi(p)\|^2+(n-2)\int_{\mathbb R^{n-1}}\langle D^2 H(p)\tilde x, \tilde x\rangle U^\dsh\, d\tilde x +\frac{n-2}{2n}\int_{\mathbb R^n}\langle D^2 K(p)x, x\rangle U^\dst\, dx$$ and $$\mathtt C_n(p):=2(n-2)\omega_{n-1}\alpha_n^2\frac{1}{|K(p)|^{\frac{n-2}{2}}\left(\mathfrak D_n^2(p)-1\right)^{\frac{n-3}{2}}}I^{n}_{n-1}$$ are strictly positive functions and $\mathtt R(p)$ depends on $p$ but it is also positive. Moreover
$$\zeta_4(\e):=\rho^2(\e)|\ln \rho(\e)|\quad \hbox{and}\quad \zeta_n(\e):=\e^2\ \hbox{if}\ n\geq5.$$
 
\end{itemize}

\end{proposition}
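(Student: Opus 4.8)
The plan is to treat (i) by a standard Lyapunov--Schmidt argument and (ii) by a careful asymptotic expansion of the energy along the ansatz, organised by powers of the concentration parameter $\delta$.

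For (i), the point is that $\Phi_\e$ has been chosen in Proposition \ref{esistphi} precisely so that the $\Pi^\bot$-component of the map in \eqref{pb1} vanishes at $u_\e=\mathcal W_{\delta,p}+\Phi_\e$. Equivalently, the gradient $J_\e'(u_\e)$ lies in $\mathcal K$, so I can write $J_\e'(u_\e)=\sum_{i=1}^n c_i(d,p)\,\mathcal Z_i$ for suitable coefficients $c_i$; then $u_\e$ solves \eqref{pb} if and only if $c_i=0$ for all $i$. Differentiating the reduced energy and using the chain rule gives $\partial_\lambda\mathfrak J_\e=\langle J_\e'(u_\e),\partial_\lambda u_\e\rangle=\sum_i c_i\langle\mathcal Z_i,\partial_\lambda u_\e\rangle$ for $\lambda\in\{d,p\}$. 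The key observation is that $\partial_\lambda\mathcal W_{\delta,p}$ reproduces the generators $\mathcal Z_i$ to leading order while $\partial_\lambda\Phi_\e$ stays in $\mathcal K^\bot$ and is of lower order, so the matrix $\langle\mathcal Z_i,\partial_\lambda u_\e\rangle$ is invertible for $\e$ small. Hence $\nabla_{(d,p)}\mathfrak J_\e=0$ is equivalent to $c_i=0$ for all $i$, which proves the equivalence.

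For (ii), I would first reduce the energy of the full ansatz to that of $\mathcal W_{\delta,p}$ alone. Writing
$$J_\e(\mathcal W_{\delta,p}+\Phi_\e)=J_\e(\mathcal W_{\delta,p})+\langle J_\e'(\mathcal W_{\delta,p}),\Phi_\e\rangle+O(\|\Phi_\e\|^2),$$
one uses that $\Phi_\e\in\mathcal K^\bot$, so the linear term is controlled by $\|\mathcal E\|\,\|\Phi_\e\|$, and together with the quadratic remainder and the norm bounds of Proposition \ref{esistphi} one checks dimension by dimension that the whole $\Phi_\e$-contribution is $o(\zeta_n(\e))$ (in particular, for $n=4$ one has $\|\Phi_\e\|^2\sim\rho^2(\e)=o(\rho^2(\e)|\ln\rho(\e)|)$). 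It then remains to expand $J_\e(\mathcal W_{\delta,p})$.

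The core computation passes to Fermi coordinates centred at $p$ and rescales by $x=(\psi_p^\partial)^{-1}(\xi)/\delta$, so that the leading part of $\mathcal W_{\delta,p}$ becomes the model bubble $U$ on $\R^n_+$ and produces the term $\mathfrak E(p)$ of Lemma \ref{energiabubble}. The subleading contributions come from three independent sources: the deviation of the metric from the Euclidean one, which at first order involves the second fundamental form and, after inserting the corrector $V_p$ and using its defining equation together with properties (iii)--(iv) of Proposition \ref{vp}, yields the term $\mathtt R(p)\|\pi(p)\|^2$ with $\mathtt R(p)>0$; the Taylor expansions of $K$ and $H$ about $p$, whose first-order parts vanish because $\nabla K(p)=\nabla H(p)=0$, leaving the Hessian integrals $\tfrac{n-2}{2n}\int_{\R^n}\langle D^2K(p)x,x\rangle U^{\dst}\,dx$ and $(n-2)\int_{\R^{n-1}}\langle D^2H(p)\tilde x,\tilde x\rangle U^{\dsh}\,d\tilde x$; and the linear perturbation $(n-1)\e\int_{\partial M}\mathcal W_{\delta,p}^2$, which after rescaling contributes $\e\delta$ times the boundary integral defining $\mathtt C_n(p)$. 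With the normalisation $\delta=\e d$ for $n\geq5$, the geometric and curvature terms scale like $\delta^2\sim\e^2 d^2$ and combine into $-\zeta_n(\e)\,d^2\mathtt A_n(p)$, while the perturbation scales like $\e\delta\sim\e^2 d$ and gives $+\zeta_n(\e)\,d\,\mathtt C_n(p)$, both at the common order $\zeta_n(\e)=\e^2$; the positive definiteness of $D^2H(p),D^2K(p)$ in (Hyp) together with the non-umbilicity hypothesis then forces $\mathtt A_n(p)>0$.

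The main obstacle I anticipate is the bookkeeping of this $\delta$-expansion: one must verify that every intermediate term either cancels or is genuinely of order lower than $\zeta_n(\e)$, and in particular that the potentially dangerous first-order-in-$\delta$ mean-curvature contribution is absorbed by the corrector $V_p$ --- this is exactly where the non-umbilicity of $\partial M$ and the construction of $V_p$ in Proposition \ref{vp} are essential. The dimension-dependent thresholds require separate handling, especially the borderline case $n=4$, where a logarithm appears in the relevant integrals, the curvature terms must be renormalised, and one is led to the choice $\delta=\rho(\e)d$ with $\zeta_4(\e)=\rho^2(\e)|\ln\rho(\e)|$.
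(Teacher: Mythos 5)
Your proposal is correct and follows essentially the same route as the paper: part (i) is the standard Lyapunov--Schmidt equivalence (which the paper does not even detail), and part (ii) reduces to expanding $J_\e(\mathcal W_{\delta,p})$ in Fermi coordinates, with the $O(\|\Phi_\e\|^2)$ remainder checked dimension by dimension, the order-$\delta$ terms cancelled via Proposition \ref{vp}(iii), the Hessian terms surviving because $\nabla K(p)=\nabla H(p)=0$, the $\e\delta$ boundary term producing $\mathtt C_n(p)$, and the $n=4$ logarithmic renormalisation handled exactly as you describe. The only difference is one of completeness, not of method: the paper actually carries out the bookkeeping of the terms $I_1,\dots,I_7$ (including the cancellation of the $\mathrm{Ric}_\nu$ and $\overline R_{\ell\ell}$ contributions) that you identify as the main remaining obstacle.
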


\begin{proof} It is quite standard to prove that $$
J_\e (\mathcal W_{\delta, p}+\Phi_\e)=J_\e(\mathcal W_{\delta, p})+\mathcal O(\|\Phi_\e\|^2)=J_\e(\mathcal W_{\delta, p})+\left\{\begin{aligned} &\mathcal O(\e^4)\quad &\hbox{if}\,\, n\geq 7\\ &\mathcal O(\e^4|\ln\e|^{\frac 4 3})\quad &\hbox{if}\,\, n=6\\ &\mathcal O(\e^3)\quad &\hbox{if}\,\, n=5\\ &\mathcal O(\rho(\e)^2)\quad &\hbox{if}\,\, n=4\end{aligned}\right.$$ 
For the expansion of the reduced energy we follow the proof of Proposition 4.4 of \cite{CPV} so many details will be skipped for the sake of brevity. We let in the following $$\Ud=\chi\left(\left(\psi_{p}^\partial\right)^{-1}(\xi)\right)\frac{1}{\delta^{\frac{n-2}{2}}}U\left(\frac{\left(\psi_{p}^\partial\right)^{-1}(\xi)}{\delta}\right),\quad \Vd=\chi\left(\left(\psi_{p}^\partial\right)^{-1}(\xi)\right)\frac{1}{\delta^{\frac{n-2}{2}}}V_p\left(\frac{\left(\psi_{p}^\partial\right)^{-1}(\xi)}{\delta}\right).$$
We expand in $\delta$ the functional 
$$\begin{aligned}
 J_\e(\Wd)&:= \underbrace{\frac{c_n}{2}\int_M |\nabla_g(\Ud+\delta \Vd)|^2}_{I_1}+\underbrace{\frac 12 \int_M \mathcal S_g (\Ud +\delta \Vd)^2}_{I_2}+\underbrace{(n-1)\e\int_{\partial M} (\Ud +\delta\Vd)^2}_{I_3}\\
 &\underbrace{-(n-2)\int_{\partial M} H \left[\left((\Ud +\delta \Vd)^+\right)^{\frac{2(n-1)}{n-2}}-\Ud^{\frac{2(n-1)}{n-2}}\right]}_{I_4}\underbrace{-(n-2)\int_{\partial M}H \Ud^{\frac{2(n-1)}{n-2}}}_{I_5}\\
 &\underbrace{-\frac{n-2}{2n}\int_M K\left[\left((\Ud +\delta \Vd)^+\right)^{\frac{2n}{n-2}}-\Ud^{\frac{2n}{n-2}}\right]}_{I_6}\underbrace{-\frac{n-2}{2n}\int_M K \Ud ^{\frac{2n}{n-2}}}_{I_7}
 \end{aligned}$$
 {\it Estimate of $I_2$:} For $n\geq 5$ it holds:

 $$\begin{aligned}
 I_2&:=\frac 12\int_{\mathbb R^n_+}\mathcal S_g(\delta x)\left(U(x)\chi(\delta x)+\delta V_p(x)\chi(\delta x)\right)^2|g(\delta x)|^{\frac 12}\, dx\\
 &=\delta^2 \frac 12 \alpha_n^2 \omega_{n-1}\frac{2(n-2)}{n-1}I_{n-1}^n \frac{\mathcal S_g(p)}{|K(p)|^{\frac{n-2}{2}}}\varphi_{\frac{n-3}{2}}(p)+\left\{\begin{aligned}&\mathcal O(\delta^3)\quad &\hbox{if}\,\, n\geq 6\\
 &\mathcal O(\delta^3|\ln\delta|)\quad &\hbox{if}\,\, n=5,\end{aligned}\right. \end{aligned}$$
 while for $n=4$ we get
 $$\begin{aligned}
 I_2&= -\frac{2\alpha_4^2\omega_3}{3} \frac{\mathcal S_g(p)}{|K(p)|}I_3^4\d^2\ln \d +\mathcal O(\d^2).\end{aligned}$$
{\it Estimate of $I_3$:} For $n\geq 4$ we have
 
 $$\begin{aligned}
 I_3&:=(n-1)\e \delta\int_{\mathbb R^{n-1}}\left(U(\tilde x, 0)\chi(\delta\tilde x, 0)+\delta V_p (\tilde x, 0)\chi(\delta\tilde x, 0)\right)^2|g(\delta\tilde x, 0)|^{\frac 12}\, d\tilde x\\
 &=\e\d\underbrace{2(n-2)\omega_{n-1}\alpha_n^2\frac{1}{|K(p)|^{\frac{n-2}{2}}\left(\mathfrak D_n^2(p)-1\right)^{\frac{n-3}{2}}}I^{n}_{n-1}}_{:=\mathtt C_n(p)\geq 0}+\left\{\begin{aligned}&\mathcal O(\e \delta^2)\quad &\hbox{if}\,\, n\geq 5\\ &\mathcal O(\e\delta^2|\ln\d|)\quad &\hbox{if}\,\, n=4.\end{aligned}\right.
 \end{aligned}$$ 
  {\it Estimate of $I_5$:}
 Since $p$ is a non-degenerate minimum point of $H$, we get for $n\geq 4$ that
 $$\begin{aligned} I_5&:=-(n-2)\int_{\mathbb R^{n-1}} H(\delta\tilde x, 0)\left(U(\tilde x, 0)\chi(\delta\tilde x, 0)\right)^{\dsh}|g(\delta \tilde x, 0)|^{\frac 12}\, d\tilde x\\
 &=-(n-2)H(p)\int_{\partial \mathbb R^n_+}U^\dsh(\tilde x, 0)\, d\tilde x-(n-2)\d^2\int_{\partial \mathbb R^n_+}\langle D^2H(p)\tilde x, \tilde x\rangle U^\dsh(\tilde x, 0)\, d\tilde x\\
 &+\delta^2\frac{\alpha_n^{\frac{2(n-1)}{n-2}}(n-2)}{6(n-1)}\omega_{n-1}\frac{H(p)\overline R_{ii}(p)}{\left(\mathfrak D_n^2(p)-1\right)^{\frac{n-3}{2}}|K(p)|^{\frac{n-1}{2}}}I_{n-1}^n+\mathcal O(\d^3).
 \end{aligned}$$
   {\it Estimate of $I_7$:}
  Analogously, since $p$ is a non-degenerate minimum point of $K$, we get for $n\geq 4$
$$\begin{aligned} I_7&:=-\frac{n-2}{2n}\int_{\mathbb R^n_+} K(\delta x)\left(U(x)\chi(\delta x)\right)^{\dst}|g(\delta x)|^{\frac 12}\, dx\\
&=\frac{n-2}{2n}|K(p)|\int_{\mathbb R^n_+}U^{\dst}(x)\, dx-\frac{n-2}{2n}\d^2\int_{\mathbb R^n_+}\langle D^2 K(p)\tilde x, \tilde x\rangle U^\dst(x)\, dx\\
&-\delta^2\frac{n-2}{4n}\frac{n-3}{2(n-1)}I_{n-1}^n\omega_{n-1}\alpha_n^{\dst}\frac{\left(\|\pi(p)\|^2+{\rm Ric}_\nu(p)\right)}{|K(p)|^{\frac{ n-2}{ 2}}}\hat\varphi_{\frac{n+1}{2}}(p)\\
&-\delta^2\alpha_n^{\dst}\frac{n-3}{2(n-1)}I_{n-1}^n\omega_{n-1}\frac{n-2}{12n(n-1)}\frac{\overline{R}_{ii}(p)}{|K(p)|^{\frac{ n-2}{ 2}}}\varphi_{\frac{n-1}{2}}(p)+\mathcal O(\delta^3).
\end{aligned}$$
   {\it Estimate of $I_4$ and $I_6$:}
Using again the fact that $p$ is a critical point of $H$ and $K$ we get for $n\geq 4$ that
$$\begin{aligned} I_4&=-(n-2)\int_{\partial \mathbb R^n_+}H(\delta\tilde x, 0)\left[\left((U+\delta V_p)^+\right)^{\frac{2(n-1)}{n-2}}-U^{\frac{2(n-1)}{n-2}}\right](\tilde x, 0)|g(\delta\tilde x, 0)|^{\frac 12}\, d\tilde x\\
&=-2(n-1)\delta H(p)\int_{\partial \mathbb R^n_+}U^{\frac{n}{n-2}}V_p\, d\tilde x -\frac{n(n-1)}{n-2}\delta^2 H(p)\int_{\partial \mathbb R^n_+}U^{\frac{2}{n-2}}V_p^2\, d\tilde x+\mathcal O(\delta^3)\end{aligned}$$
and
 $$\begin{aligned}
 I_6&=|K(p)|\delta\int_{\mathbb R^n_+}U^{\frac{n+2}{n-2}}V_p+\frac{n+2}{2(n-2)}\delta^2|K(p)|\int_{\mathbb R^n_+}U^{\frac{4}{n-2}}V_p^2+\mathcal O(\delta^3).
 \end{aligned}$$
    {\it Estimate of $I_1$:}
 Now we evaluate $I_1$. First we write $$I_1:=\underbrace{\frac{c_n}{2}\int_M|\nabla_g \Ud|^2\,d\nu_g}_{:=I_1^1}+\underbrace{c_n\delta\int_M \nabla_g\Ud\nabla_g\Vd\, d\nu_g}_{:=I_1^2}+\underbrace{\frac{c_n}{2}\delta^2\int_M |\nabla_g\Vd|^2\, d\nu_g}_{:=I_1^3}$$ and we analyze separately the terms $I_1^i$ with $i=1, 2, 3$.
 
 \smallskip 
 
   {\it Estimate of $I_1^1$:} For $n\geq 5$
$$\begin{aligned}
 I_1^1 &=\frac{c_n}{2}\int_{\mathbb R^n_+}|\nabla U|^2-\frac{c_n\alpha_n^2(n-2)^2(n-3)}{4(n-1)}\omega_{n-1}I_{n-1}^n\d^2\frac{(\|\pi(p)\|^2+{\rm Ric}_\eta(p))}{|K(p)|^{\frac{n-2}{2}}}\hat\varphi_{\frac{n-1}{2}}(p)\\
 &-\frac{c_n\alpha_n^2(n-2)^2(n-3)}{8(n-1)}\omega_{n-1}I_{n-1}^n\d^2\frac{(\|\pi(p)\|^2+{\rm Ric}_\eta(p))}{|K(p)|^{\frac{n-2}{2}}}\hat\varphi_{\frac{n+1}{2}}(p)\\
 &-\frac{c_n\alpha_n^2(n-2)^2}{12(n-1)}\omega_{n-1}I_{n-1}^n\d^2\frac{\overline R_{\ell \ell}(p)}{|K(p)|^{\frac{n-2}{2}}}\varphi_{\frac{n-3}{2}}(p)-\frac{c_n\alpha_n^2(n-2)^2(n-3)}{24(n-1)^2}\omega_{n-1}I_{n-1}^n\d^2\frac{\overline R_{\ell\ell}(p)}{|K(p)|^{\frac{n-2}{2}}}\varphi_{\frac{n-1}{2}}(p)\\
&+\frac{c_n\alpha_n^2}{4}\frac{(n-2)^2(n-3)}{(n-1)^2}\omega_{n-1}I_{n-1}^n\delta^2\frac{\left(3\|\pi(p)\|^2+{\rm Ric}_\eta(p)\right)}{|K(p)|^{\frac{n-2}{2}}}\hat\varphi_{\frac{n-1}{2}}(p)+\mathcal O(\delta^3)\\
 \end{aligned}$$
and if $n=4$
 $$\begin{aligned}I_1^1
 &= \frac{c_4}{2}\int_{\mathbb R^4_+}|\nabla U|^2+\frac 13 c_4\omega_3\alpha_4^2I_3^4\frac{\left(\|\pi(p)\|^2+{\rm Ric}_\nu(p)\right)}{|K(p)|}\d^2\ln \d +\frac 1 9 c_4\omega_3 \alpha_4^2I_3^4 \frac{\overline R_{\ell\ell}(p)}{|K(p)|}\d^2\ln \d \\
 &-\frac{1}{54} c_4\omega_3\alpha_4^2\frac{\left(3\|\pi(p)\|^2+{\rm Ric}_\nu(p)\right)}{|K(p)|}I_3^4\d^2\ln \d+\mathcal O(\d^2).
\end{aligned}$$
    {\it Estimate of $I_1^2$ and $I_1^3$ for $n\geq 5$:}
We have
$$\begin{aligned} I_1^2&=-\delta \frac{n+2}{n-2}|K(p)|\int_{\mathbb R^n_+}U^{\frac{n+2}{n-2}}V_p\, dx+c_n \frac n 2 \delta H(p)\int_{\partial \mathbb R^n_+}U^{\frac{n}{n-2}}V_p\\
&-c_n\delta^2\int_{\mathbb R^n_+}|\nabla V_p|^2+c_n\frac n 2 \delta^2H(p)\int_{\partial\mathbb R^n_+}U^{\frac{2}{n-2}}V_p^2-\frac{n+2}{n-2}|K(p)|\delta^2\int_{\mathbb R^n_+}U^{\frac{4}{n-2}}V_p^2+\mathcal O(\delta^3)\end{aligned}$$ Moreover
$$I_1^3:=\frac{c_n}{2}\delta^2\int_{\mathbb R^n_+}|\nabla V_p|^2+\mathcal O(\delta^3).$$
 {\it Estimate of $I_1^2$ and $I_1^3$ for $n=4$:}
We have (see \cite{CPV})
$$I_1^2+I_1^3=-3\frac{64\pi^2}{|K(p)|}\|\pi(p)\|^2\delta^2|\ln\d|+\mathcal O(\d^2).$$

{\it Conclusion:}
\begin{itemize}
\item[(i)] the terms of order $\d$ cancel because of Proposition \ref{vp}-(iii)
\item[(ii)] the higher order terms which contain ${\rm Ric}_\nu(p)$ and $\overline R_{\ell\ell}(p)$ (di order $\d^2$ if $n\geq5$ and $\d^2|\ln\d|$ if $n=4$) cancel.
\item[(iii)] the terms with $\|\pi(p)\|^2$ for $n\geq 5$ are
$$\begin{aligned}\left(\ldots\ldots\right)&=\delta^2\alpha_n^2\frac{n-2}{n-1}\omega_{n-1}I_{n-1}^n\frac{\|\pi(p)\|^2}{|K(p)|^{\frac{n-2}{2}}}\left(\varphi_{\frac{n-3}{2}}(p)-(n-1)(n-3)\hat\varphi_{\frac{n+1}{2}}(p)\right.\\
&\quad\left.-(n-1)(n-3)\hat\varphi_{\frac{n-1}{2}}(p)+3(n-3)\hat\varphi_{\frac{n-1}{2}}(p)\right)\\
&=-\delta^2\underbrace{\alpha_n^2\frac{n-2}{n-1}\omega_{n-1}I_{n-1}^n\frac{\|\pi(p)\|^2}{|K(p)|^{\frac{n-2}{2}}}\left(4(n-3)\hat\varphi_{\frac{n-1}{2}}(p)+\varphi_{\frac{n-3}{2}}(p)\right)}_{:=\mathtt B_n(p)>0}\end{aligned}$$
and by Proposition B.2 of \cite{CPV} $$\frac12 \int\limits_{\mathbb R^n_+}\left(-c_n\Delta V_p+{n+2\over n-2}|K(p)|U^{4\over n-2}V_p\right)V_p=\frac 12 \mathtt F_n(p) \|\pi(p)\|^2$$
\item[(iv)] the terms with $\|\pi(p)\|^2$ for $n=4$ are
$$\left(\ldots\ldots\right)=-\left(\frac{192\pi^2}{|K(p)|}+\alpha_4^2\omega_3 I_3^4 \frac{1}{|K(p)|}\right)\|\pi(p)\|^2\d^2|\ln\delta|.$$\end{itemize}
Then for $n\geq 5$
$$\mathfrak J_\e(d, p)=E(U)-\d^2\mathtt A_n(p)(1+o(1))+\e \d \mathtt C_n(p)(1+o(1)) $$
where $$\begin{aligned}\mathtt A_n(p)&=\mathtt B_n(p)+\frac 12 \mathtt F_n(p)\|\pi(p)\|^2+(n-2)\int_{\mathbb R^{n-1}}\langle D^2 H(p)\tilde x, \tilde x\rangle U^\dsh(\tilde x, 0)\, d\tilde x\\& +\frac{n-2}{2n}\int_{\mathbb R^n}\langle D^2 K(p)x, x\rangle U^\dst(x)\, dx>0\end{aligned}.$$
While for $n=4$ we have $$\mathfrak J_\e(d, p)= E(U)-\d^2|\ln\d|\mathtt A_4(p)(1+o(1))+\e \d \mathtt C_4(p)(1+o(1)) $$
where $$\begin{aligned}\mathtt A_4(p)&=\left(\frac{192\pi^2}{|K(p)|}+\alpha_4^2\omega_3 I_3^4 \frac{1}{|K(p)|}\right)\|\pi(p)\|^2+2\int_{\mathbb R^{3}}\langle D^2 H(p)\tilde x, \tilde x\rangle U^\dsh(\tilde x, 0)\, d\tilde x \\
&+\frac{1}{4}\int_{\mathbb R^n}\langle D^2 K(p)x, x\rangle U^\dst\, dx.\end{aligned}$$
In both cases $\mathtt A_n(p)\geq 0$ since $p$ is a non-degenerate minimum point of $H$ and $K$. By the choice of $\delta$ the result follows.
\end{proof}
We are now ready to prove the main result of the paper:
\begin{proof}[Proof of Theorem \ref{principale}]
By Proposition \ref{pc} it is sufficient to find a critical point of the reduced energy $\mathfrak J_\e(p, d)$.

\smallskip

Let us set $\mathtt G_p(d)=d \mathtt C_n(p)-d^2 \mathtt A_n(p)$ where $\mathtt C_n(p)$ and $\mathtt A_n(p)$ are the positive functions defined in Proposition \ref{pc}. Let $p_0\in\partial M$ be a non-degenerate minimum point of $H$ and $K$ in the sense of the assumption (Hyp). Then it is easy to see that $p_0$ is a non-degenerate maximum point of $\mathfrak E(p)$.\\ Hence, there is a $\sigma_1-$ neighbourhood of $p_0$, say $\mathcal U_{\sigma_1}\subset \partial M$, such that for any sufficiently small $\gamma>0$ \begin{equation}\label{Ep}\mathfrak E(p)\leq \mathfrak E(p_0)-\gamma\quad \forall\,\, p\in \partial U_{\sigma_1}.\end{equation} Now we see that
\begin{equation}\label{choiced} d_0:=\frac{\mathtt C_n(p_0)}{2\mathtt A_n(p_0)}\end{equation}
is a strictly maximum point of the function $\mathtt G_{p_0}(d)$. Then there is an open interval $I_{\sigma_2}$ such that $\bar I_{\sigma_2}\subset \mathbb R^+$ and 
\begin{equation}\label{Gp}
G_{p_0}(d)\leq G_{p_0}(d_0)-\gamma\quad \forall\,\, d\in\partial I_{\sigma_2}.\end{equation}
Let us set $\mathcal K:=\overline{\mathcal U_{\sigma_1}\times I_{\sigma_2}}$ and let $\eta>0$ be small enough so that $\mathcal K\subset \mathcal U_{\sigma_1}\times \left(\eta, \frac 1 \eta\right)$. Since the reduced functional is continuous on $\mathcal K$ then, by Weierstrass Theorem it follows that it has a global maximum point in $\mathcal K$. Let $(p_\e, d_\e)$ such point. We want to show that it is in the interior of $\mathcal K$.\\
By contradiction suppose that the point $(p_{\e}, d_{\e})\in\partial \mathcal K$. There are two possibilities: \begin{itemize}\item[(a)] $p_{\e}\in\partial\mathcal U_{\sigma_1}$, $d_{\e}\in \bar I_{\sigma_2}$ \item[(b)]  $p_{\e}\in\mathcal U_{\sigma_1}$, $d_{\e}\in \partial I_{\sigma_2}$.\end{itemize}

If (a) holds, by using Proposition \ref{pc}, the fact that $(p_\e, d_\e)$ is a maximum point for $\mathfrak J$ and \eqref{Ep} we have 
\begin{equation*} 0\leq \mathfrak J_\e(p_\e, d_\e)-\mathfrak J_\e(p_0, d_\e)=\mathfrak E(p_\e)-\mathfrak E(p_0)+o(\zeta_n(\e))\leq -\gamma +o(\zeta_n(\e))<0\end{equation*} for $\e$ sufficiently small, which is in contradiction with \eqref{jp1}.\\ 
If now (b) holds, then by using Proposition \ref{pc}, again the fact that $(p_\e, d_\e)$ is a maximum point for $\mathfrak J_\e$ and \eqref{Gp} we have 
\begin{equation}\label{jp1}
0\leq \mathfrak J_\e(p_\e, d_\e)-\mathfrak J_\e(p_\e, d_0)=\zeta_n(\e)\left(\mathtt G_{p_\e}(d_\e)-\mathtt G_{p_\e}(d_0)+o(1)\right)\leq -\gamma\zeta_n(\e)+o(\zeta_n(\e))<0 \end{equation} for any $\e$ sufficiently small which is again a contradiction.\\ It remains to show that $(p_\e, d_\e)\to (p_0, d_0)$ as $\e\to 0$. Indeed,
by using the fact that $(p_\e, d_\e)$ is a maximum point for $\mathfrak J_\e$ and Proposition \ref{pc} we get
$$\mathfrak J_\e(p_0, d_\e)\leq \mathfrak J_\e(p_\e, d_\e) \quad\iff\quad \mathfrak E(p_0)\leq \mathfrak E(p_\e).$$ Moreover by \eqref{Ep} $$\mathfrak E(p_\e)\leq \mathfrak E(p_0)$$ and hence, passing to the limit it follows $$\lim_{\e\to 0}\mathfrak E(p_\e)=\mathfrak E(p_0).$$ Up to a subsequence, since $p_\e$ is a local maximum for $\mathfrak E$ it follows that $p_\e\to p_0$.\\
In the same way one can show that $d_\e\to d_0$ as $\e\to 0$.
\end{proof}

Finally, we evaluate the energy of the bubble.
\begin{lemma}\label{energiabubble}
The energy of the bubble is:
$$\begin{aligned} \mathfrak E(p)&:=\frac{a_n}{|K(p)|^{\frac{n-2}{2}}}\left[-(n-1) \varphi_{\frac{n+1}{2}}(p)+\frac{\mathfrak D_n(p)}{(\mathfrak D_n^2(p)-1)^{\frac{n-1}{2}}}\right]\end{aligned}$$ where $$a_n:=\alpha_n^\dsh \omega_{n-1}I^n_{n-1}\frac{n-3}{(n-1)\sqrt{n(n-1)}}.$$\end{lemma}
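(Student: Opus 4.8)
The quantity $\mathfrak E(p)$ is nothing but the leading, $\delta$-independent term $E(U)$ appearing in the expansion of $J_\e(\mathcal W_{\delta,p})$ carried out in the proof of Proposition \ref{pc}: collecting the contributions of $I_1^1$, $I_5$ and $I_7$ of order $\delta^0$ (the $\mathcal S_g$-mass term $I_2$, the boundary-mass term $I_3$, and every term involving $V_p$ all carry extra powers of $\delta$ or $\e$), one is left with
$$E(U)=\frac{c_n}{2}\int_{\mathbb{R}^n_+}\modgrad{U}\,dx-(n-2)H(p)\int_{\partial\mathbb{R}^n_+}U^{\dsh}\,d\tilde x+\frac{n-2}{2n}|K(p)|\int_{\mathbb{R}^n_+}U^{\dst}\,dx.$$
Thus the whole problem reduces to evaluating these three explicit integrals for the bubble \eqref{U}.

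The first step is to eliminate the Dirichlet integral by means of the equation solved by $U$. Multiplying $-c_n\Delta U=-|K(p)|U^{\frac{n+2}{n-2}}$ by $U$, integrating over $\mathbb{R}^n_+$, and applying Green's identity together with the boundary condition $\frac{2}{n-2}\frac{\de U}{\de\nu}=H(p)U^{\frac{n}{n-2}}$, one obtains
$$\frac{c_n}{2}\int_{\mathbb{R}^n_+}\modgrad{U}=-\frac{|K(p)|}{2}\int_{\mathbb{R}^n_+}U^{\dst}+(n-1)H(p)\int_{\partial\mathbb{R}^n_+}U^{\dsh},$$
where I used $\frac{c_n(n-2)}{4}=n-1$. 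Substituting this into $E(U)$, the boundary coefficient becomes $(n-1)-(n-2)=1$ and the interior coefficient $-\frac12+\frac{n-2}{2n}=-\frac1n$, so the energy collapses to only two integrals,
$$E(U)=-\frac{|K(p)|}{n}\int_{\mathbb{R}^n_+}U^{\dst}\,dx+H(p)\int_{\partial\mathbb{R}^n_+}U^{\dsh}\,d\tilde x.$$

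Next I would compute these two integrals in polar coordinates. Writing $D=\D_n(p)$ and integrating first over $\tilde x\in\mathbb{R}^{n-1}$ at fixed height and then over the shifted vertical variable $t=x_n+D\in(D,+\infty)$, the inner radial integrals produce the constants $I_m^\alpha$ while the outer vertical integral produces the functions $\varphi_m(p)$. I expect
$$\int_{\mathbb{R}^n_+}U^{\dst}=\frac{\alpha_n^{\dst}}{|K(p)|^{\frac n2}}\,\omega_{n-1}\,I_n^{n-2}\,\varphi_{\frac{n+1}{2}}(p),$$
$$\int_{\partial\mathbb{R}^n_+}U^{\dsh}=\frac{\alpha_n^{\dsh}}{|K(p)|^{\frac{n-1}{2}}}\,\omega_{n-1}\,I_{n-1}^{n-2}\,\frac{1}{(D^2-1)^{\frac{n-1}{2}}},$$
with $\omega_{n-1}$ the usual factor coming from spherical coordinates in $\mathbb{R}^{n-1}$.

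The final step, where the bookkeeping is most delicate, is to normalize all constants to the form in the statement. To this end I would use the Beta-function reductions $I_n^{n-2}=\frac{n-3}{2(n-1)}I_{n-1}^n$ and $I_{n-1}^{n-2}=\frac{n-3}{n-1}I_{n-1}^n$, the identity $\alpha_n^{\dst}=2\sqrt{n(n-1)}\,\alpha_n^{\dsh}$ (which follows from $\alpha_n=(4n(n-1))^{\frac{n-2}{4}}$, since $\alpha_n^{\frac{2}{n-2}}=2\sqrt{n(n-1)}$), and the defining relation $H(p)=\D_n(p)\sqrt{|K(p)|}/\sqrt{n(n-1)}$ to rewrite the factor $H(p)$ in the boundary term. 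After these substitutions the common prefactor $\frac{a_n}{|K(p)|^{\frac{n-2}{2}}}$, with $a_n$ exactly as in the statement, factors out of both contributions, the interior term carrying $-(n-1)\varphi_{\frac{n+1}{2}}(p)$ and the boundary term carrying $\D_n(p)/(\D_n^2(p)-1)^{\frac{n-1}{2}}$, which is precisely the asserted identity. The only genuine obstacle is computational, namely matching the powers of $\alpha_n$ and $|K(p)|$ and applying the $\Gamma$-function recursions correctly; there is no conceptual difficulty once the Pohozaev-type reduction of the gradient term is in place.
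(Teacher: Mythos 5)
Your proposal is correct and follows essentially the same route as the paper: identify $\mathfrak E(p)$ with $E(U)$, test the equation satisfied by $U$ against $U$ to trade the Dirichlet integral for the boundary and interior mass terms (yielding $E(U)=-\tfrac1n|K(p)|\int_{\R^n_+}U^{\dst}+H(p)\int_{\de\R^n_+}U^{\dsh}$), and then evaluate the two remaining integrals in polar coordinates with the Beta-function reductions $I_n^{n-2}=\tfrac{n-3}{2(n-1)}I_{n-1}^n$ and $I_{n-1}^{n-2}=\tfrac{n-3}{n-1}I_{n-1}^n$, which are exactly the identities the paper uses implicitly. All the constants you record ($\alpha_n^{\dst-\dsh}=2\sqrt{n(n-1)}$, $H(p)=\D_n(p)\sqrt{|K(p)|}/\sqrt{n(n-1)}$, the coefficients $-\tfrac1n$ and $1$) match the paper's computation.
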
\begin{proof}
The energy of the bubble is given by
$$E(U)=\frac{c_n}{2}\int_{\mathbb R^+_n}|\nabla U|^2+\frac{n-2}{2n}|K(p)|\int_{\mathbb R^n_+}U^{2^*}-(n-2)H(p)\int_{\partial\mathbb R^n_+}U^{\dsh}$$
where $c_n:=\frac{4(n-1)}{n-2}$ and $U$ is defined in \eqref{U}.\\ We recall that $U$ satisfies \begin{equation*}
\left\{\begin{aligned}& -c_n \Delta U =-|K(p)|U^{\frac{n+2}{n-2}}\quad &\mbox{in}\,\, \mathbb R^n_+\\ &\frac{2}{n-2}\frac{\partial U}{\partial\nu}=H(p) U^{\frac{n}{n-2}}\quad &\mbox{on}\,\,\partial\mathbb R^n_+.\end{aligned}\right.\end{equation*}
Hence
$$\frac{c_n}{2}\int_{\mathbb R^n_+}|\nabla U|^2=\frac{c_n(n-2)}{4}H(p)\int_{\partial\mathbb R^n_+}U^\dsh-\frac 12 |K(p)|\int_{\mathbb R^n_+}U^{\dst}$$
Then
$$\begin{aligned}E(U)&=-\frac 1 n |K(p)|\int_{\mathbb R^n_+}U^\dst+H(p)\int_{\partial\mathbb R^n_+}U^\dsh\\
&=-\frac 1 n |K(p)| \frac{\alpha_n^\dst}{|K(p)|^{\frac n 2}}\int_{\mathbb R^n_+}\frac{1}{(|\tilde x|^2+(x_n+\mathfrak D_n(p))^2-1)^n}\, d\tilde x\, dx_n\\
&+H(p)\frac{\alpha_n^\dsh}{|K(p)|^{\frac{n-1}{2}}}\int_{\partial\mathbb R^n_+}\frac{1}{(|\tilde x|^2+\mathfrak D_n^2(p)-1)^{n-1}}\, d\tilde x\end{aligned}$$
Now, by making some computations we get that for $n\geq 4$
$$\begin{aligned} \int_{\partial\mathbb R^n_+}\frac{1}{(|\tilde x|^2+\mathfrak D_n^2(p)-1)^{n-1}}\, d\tilde x
=\omega_{n-1}\frac{n-3}{n-1}\frac{I^{n}_{n-1}}{(\mathfrak D_n(p)^2-1)^{\frac{n-1}{2}}}.\end{aligned}$$


and  
$$\begin{aligned}
\int_{\mathbb R^n_+}\frac{1}{(|\tilde x|^2+(x_n+\mathfrak D_n(p))^2-1)^n}\, d\tilde x\, dx_n
=\omega_{n-1}\frac{n-3}{2(n-1)}I_{n-1}^{n}\varphi_{\frac{n+1}{2}}(p).
\end{aligned}$$
Putting together the terms we get
$$\begin{aligned}\mathfrak E(p)=E(U)&=-\frac{\alpha_n^\dst(n-3)}{2 n(n-1)}\omega_{n-1}I^n_{n-1}\frac{\varphi_{\frac{n+1}{2}}(p)}{|K(p)|^{\frac{n-2}{2}}}+\frac{\alpha_n^{\dsh}(n-3)}{n-1}\omega_{n-1}I^{n}_{n-1}\frac{H(p)}{|K(p)|^{\frac{n-1}{2}}(\mathfrak D_n^2(p)-1)^{\frac{n-1}{2}}}\\
&=\alpha_n^\dsh \omega_{n-1}I^n_{n-1}\frac{n-3}{n-1}\frac{1}{|K(p)|^{\frac{n-2}{2}}}\left[-\frac{ \alpha_n^{\dst-\dsh}}{2 n}\varphi_{\frac{n+1}{2}}(p)+\frac{H(p)}{|K(p)|^{\frac{1}{2}}(\mathfrak D_n^2(p)-1)^{\frac{n-1}{2}}}\right]\\
&=\underbrace{\alpha_n^\dsh \omega_{n-1}I^n_{n-1}\frac{n-3}{(n-1)\sqrt{n(n-1)}}}_{:=a_n}\frac{1}{|K(p)|^{\frac{n-2}{2}}}\left[-(n-1)\varphi_{\frac{n+1}{2}}(p)+\frac{\mathfrak D_n(p)}{(\mathfrak D_n^2(p)-1)^{\frac{n-1}{2}}}\right].\\
\end{aligned} $$\end{proof}

\end{document}